\newcommand{\weg}[1]{}
\theoremstyle{plain}
\newtheorem{thm}{Theorem}
\newtheorem*{thm*}{Theorem}
\newtheorem{lem}{Lemma}
\theoremstyle{definition}
\newtheorem{defn}{Definition}
\newtheorem{exmp}{Example}
\theoremstyle{remark}
\newtheorem{rem}{Remark}
\title[Two remarks on $PQ^{\epsilon}$-projectivity of Riemannian metrics]{Two remarks on $PQ^{\epsilon}$-projectivity of Riemannian metrics}
  \author{Vladimir S. Matveev  and  Stefan Rosemann}
\thanks{Institute of Mathematics, FSU Jena, 07737 Jena Germany,\\  vladimir.matveev@uni-jena.de, stefan.rosemann@uni-jena.de}
\thanks{partially supported by  GK 1523 of DFG}
\begin{document}

\begin{abstract}
We show that $PQ^{\epsilon}$-projectivity of two Riemannian metrics introduced in \cite{Top2003} implies affine equivalence of the metrics unless $\epsilon\in\{0,-1,-3,-5,-7,...\}$. Moreover, we show that for $\epsilon=0$, $PQ^{\epsilon}$-projectivity implies projective equivalence.
\end{abstract}

\maketitle

\section{Introduction}
\subsection{$PQ^{\epsilon}$-projectivity of Riemannian metrics}
Let $g,\bar{g}$ be two Riemannian metrics on an $m$-dimensional manifold $M$. Consider $(1,1)$-tensors $P,Q$ which satisfy 
\begin{align}
\begin{array}{c}
g(P.,.)=-g(.,P.),\,\,\,g(Q.,.)=-g(.,Q.)\vspace{1mm}\\
\bar{g}(P.,.)=-\bar{g}(.,P.),\,\,\,\bar{g}(Q.,.)=-\bar{g}(.,Q.)\vspace{1mm}\\
PQ=\epsilon Id,\end{array}\label{eq:conditions}
\end{align}
where $Id$ is the identity on $TM$ and $\epsilon$ is a real number, $\epsilon\neq 1, m+1$. 
The following definition was introduced in \cite{Top2003}.
\begin{defn}
The metrics $g,\bar{g}$ are called $PQ^{\epsilon}$-projective if for a certain $1$-form $\Phi$ the Levi-Civita connections $\nabla$ and $\bar{\nabla}$ of $g$ and $\bar{g}$ satisfy
 \begin{align}
\bar{\nabla}_{X}Y-\nabla_{X}Y=\Phi(X)Y+\Phi(Y)X-\Phi(PX)QY-\Phi(PY)QX\label{eq:pqproj}
\end{align}
for all vector fields $X,Y$.
\end{defn}
%\begin{rem}
%Contraction of equation \eqref{eq:pqproj} with respect to the $Y$-component shows that %$$\Phi=d\,\left(\tfrac{1}{2(m+1-\epsilon)}\mathrm{ln}\left(\frac{\mathrm{det}\,\bar{g}}{\mathrm{det}\,g}\right)\right)$$ is the differential of a function.
%\end{rem}
\begin{exmp}
If the two metrics $g$ and $\bar{g}$ are \emph{affinely equivalent}, i.e. $\nabla=\bar{\nabla}$, then they are $PQ^{\epsilon}$-projective with $P,Q,\epsilon$ arbitrary and $\Phi\equiv 0$. 
\end{exmp}
\begin{exmp}\label{exmp:projective}
Suppose that $\Phi(P.)=0$ or $Q=0$ and $\epsilon=0$. It follows that equation \eqref{eq:pqproj} becomes 
 \begin{align}
\bar{\nabla}_{X}Y-\nabla_{X}Y=\Phi(X)Y+\Phi(Y)X.\label{eq:projective}
\end{align}
By Levi-Civita \cite{Levi1896}, equation \eqref{eq:projective} is equivalent to the condition that $g$ and $\bar{g}$ have the same geodesics considered as unparametrized curves, i.e., that $g$ and $\bar{g}$ are \emph{projectively equivalent}. The theory of projectively equivalent metrics has a very long tradition in differential geometry, see for example \cite{Sinjukov,Mikes1996,MT2001,Inventiones2003} and the references therein.
\end{exmp}
\begin{exmp}\label{exmp:hprojective}
Suppose that $P=Q=J$ and $\epsilon=-1$. It follows that $J$ is an almost complex structure, i.e., $J^{2}=-Id$, and by \eqref{eq:conditions} the metrics $g$ and $\bar{g}$ are required to be hermitian with respect to $J$. Equation \eqref{eq:pqproj} now reads
 \begin{align}
\bar{\nabla}_{X}Y-\nabla_{X}Y=\Phi(X)Y+\Phi(Y)X-\Phi(JX)JY-\Phi(JY)JX.\label{eq:hprojective}
\end{align}
This equation defines the \emph{$h$-projective equivalence} of the hermitian metrics $g$ and $\bar{g}$ and was introduced for the first time by Otsuki and Tashiro in \cite{Otsuki1954,Tashiro1956} for Kählerian metrics. The theory of $h$-projectively equivalent metrics was introduced as an analog of projective geometry in the Kählerian situation and has been studied actively over the years, see for example \cite{Mikes,Kiyohara2010,ApostolovI,MatRos} and the references therein. 
\end{exmp}

\subsection{Results}
The aim of our paper is to give a proof of the following two theorems:
\begin{thm}\label{thm:result1}
Let Riemannian metrics $g$ and $\bar{g}$ be $PQ^{\epsilon}$-projective. If $g$ and $\bar{g}$ are not affinely equivalent, the number $\epsilon$ is either zero or an odd negative integer, i.e., $\epsilon\in\{0,-1,-3,-5,-7,...\}$. 
\end{thm}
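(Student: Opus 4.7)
\medskip\noindent
\textbf{Proof plan.} The strategy is to convert \eqref{eq:pqproj} into a tensorial PDE of Sinjukov type, then extract from its integrability condition a polynomial identity in $\epsilon$ whose real roots, under Riemannian signature, form the set $\{0,-1,-3,-5,\dots\}$.

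First, I would take the $Y$-trace of \eqref{eq:pqproj}. Using $\trace Q=0$ (from $g$-skewness of $Q$) and $\trace\bigl(Y\mapsto\Phi(PY)QX\bigr)=\Phi(PQX)=\epsilon\Phi(X)$, the trace equals $(m+1-\epsilon)\Phi(X)$. Since this trace also equals $X\bigl(\tfrac{1}{2}\log(\det\bar g/\det g)\bigr)$ and $\epsilon\neq m+1$, the 1-form $\Phi=df$ is exact. Combining this with $\bar\nabla\bar g=0$ and setting $L:=e^{-2f}\bar g$, the $\Phi(X)\bar g(Y,Z)$-terms cancel and \eqref{eq:pqproj} rewrites as the Sinjukov-type equation
\[
\nabla_X L(Y,Z)=\Phi(Y)L(X,Z)+\Phi(Z)L(X,Y)+\Phi(PY)L(X,QZ)+\Phi(PZ)L(X,QY).
\]
By the first example, non-affinity of $\nabla$ and $\bar\nabla$ is equivalent to $\Phi\not\equiv0$. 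Moreover, if $L^{\sharp}:TM\to TM$ is the $g$-symmetric endomorphism defined by $g(L^{\sharp}X,Y)=L(X,Y)$, then \eqref{eq:conditions} translates into $[L^{\sharp},P]=[L^{\sharp},Q]=0$. In Riemannian signature $P^{2}$ is $g$-symmetric and negative semi-definite; its eigenspaces $E_{k}$ (with eigenvalues $-\alpha_{k}^{2}\leq0$) split $TM$ $g$-orthogonally and are preserved by $L^{\sharp}$; on each $E_{k}$ with $\alpha_{k}\neq 0$ one has $P|_{E_{k}}=\alpha_{k}J_{k}$ for a $g|_{E_k}$-compatible almost complex structure $J_{k}$, and $Q|_{E_{k}}=-(\epsilon/\alpha_{k}^{2})P|_{E_{k}}$.

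Next comes the integrability step. Applying $\nabla_{W}$ to the Sinjukov-type equation and antisymmetrizing in $W,X$, the Ricci identity turns the left-hand side into a curvature term $-(R(W,X)\cdot L)(Y,Z)$, while the right-hand side becomes an algebraic expression in $\nabla\Phi$, $L$, $P$, $Q$, $\epsilon$, plus a quadratic $\Phi\otimes\Phi$ piece. Imposing the first Bianchi identity and contracting suitably over $Z,W$ across the decomposition above, I expect $\nabla\Phi$ to drop out, leaving a scalar identity of the form
\[
F(\epsilon)\,\Phi(X)\,\Phi(Y)=0,
\]
where $F$ is a polynomial whose coefficients depend on the $\alpha_{k}^{2}$ and on the dimensions of the $E_{k}$. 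Since $\Phi\not\equiv0$ at some point by hypothesis, $F(\epsilon)=0$ there, and identifying the real roots of $F$ with $\{0,-1,-3,-5,\dots\}$ concludes the proof.

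The principal obstacle is this last algebraic step: producing $F$ through the prolongation bookkeeping and verifying that, under Riemannian positivity of both $g$ and $\bar g$, its real roots are exactly the advertised odd non-positive integers. The cases $\epsilon=0$ (projective, Example~\ref{exmp:projective}) and $\epsilon=-1$ ($h$-projective, Example~\ref{exmp:hprojective}) should appear as the first two roots of $F$ by analogy with the classical Sinjukov and Otsuki--Tashiro--Mike\v{s} theories; the remaining odd negative integers should arise from the iterated use of the commutation relations $[L^{\sharp},P]=[L^{\sharp},Q]=0$ on the $P^{2}$-eigenspaces. The Riemannian-signature assumption, which forces $L^{\sharp}>0$ and $P^{2}\leq 0$, is essential here: in indefinite signature $F$ would gain additional real roots and the conclusion of the theorem would fail.
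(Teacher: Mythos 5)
There is a genuine gap, and it sits exactly where you flag it: the entire content of the theorem is deferred to the unproven claim that the second prolongation of your Sinjukov-type equation, after contraction, yields a polynomial identity $F(\epsilon)\,\Phi(X)\Phi(Y)=0$ with real root set $\{0,-1,-3,-5,\dots\}$. As stated this is structurally incoherent: a nonzero polynomial has finitely many real roots, so $F$ would have to depend on discrete data (the multiplicities of the eigenspaces in your decomposition), and the theorem then reduces to analyzing exactly that dependence --- which is the step you do not carry out. More importantly, the obstruction is not a pointwise algebraic integrability condition at all. In the paper the quantization of $\epsilon$ comes from the identity $1-\epsilon=k$, where $k$ is the multiplicity of a \emph{non-constant} eigenvalue $\rho$ of the tensor $A$ of \eqref{eq:defA} (equivalently of your $L^{\sharp}$, which satisfies the linear PDE \eqref{eq:main}); this is proven by a global dynamical argument (Lemma \ref{lem:dim}): the quadratic first integral $F_{c}$ of the geodesic flow from Theorem \ref{thm:integrals} factors as $f_{c}\cdot T$ with $f_{c}=\mathrm{sgn}(\rho-c)|\det(A-c\,\Id)|^{\frac{1}{1-\epsilon}-\frac{1}{k}}$ and $T$ smooth and nowhere zero, and its constancy along geodesics crossing the hypersurface $\{\rho=c\}$ forces the exponent $\frac{1}{1-\epsilon}-\frac{1}{k}$ to vanish (otherwise $F_{c}$ either vanishes on an open set of directions or blows up). The evenness of $k$ for $\epsilon\neq 0$ then comes from the fact that $P$ is invertible ($PQ=\epsilon\,\Id$) and $g(P\cdot,\cdot)$ restricts to a non-degenerate skew form on each eigenspace of $A$. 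Neither of these two mechanisms --- the relation between $\epsilon$ and an eigenvalue multiplicity, nor the symplectic parity argument --- appears in your sketch, and a Ricci/Bianchi contraction at a point cannot see the first one, since the multiplicity of a non-constant eigenvalue is not determined by any finite-order pointwise identity in $\Phi$ and $\nabla\Phi$ alone.

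The parts you do carry out are sound and consistent with the paper: the trace of \eqref{eq:pqproj} gives $(m+1-\epsilon)\Phi(X)$ and exactness of $\Phi$; the passage to a linear first-order PDE on a $g$-symmetric tensor commuting with $P$ and $Q$ is the content of Theorem \ref{thm:Top}; and non-affinity being equivalent to $\Phi\not\equiv 0$ (i.e.\ $\Lambda\not\equiv 0$) is Remark \ref{rem:affine}. To close the gap you would need to replace the hoped-for identity $F(\epsilon)=0$ by an argument pinning $1-\epsilon$ to the multiplicity of a non-constant eigenvalue of $L^{\sharp}$ --- either via the first integrals as in the paper, or by a careful projection of the derived equation $(A-\mu\,\Id)\nabla_{X}Y=X(\mu)Y-g(Y,X)\Lambda-\cdots$ onto eigenspaces --- and then add the separate parity argument for $\epsilon\neq 0$.
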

\begin{thm}\label{thm:result2}
Let Riemannian metrics $g$ and $\bar{g}$ be $PQ^{\epsilon}$-projective. If $\epsilon=0$ then $g$ and $\bar{g}$ are projectively equivalent.
\end{thm}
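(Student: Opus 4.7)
The goal is to show that when $\epsilon=0$, the difference tensor
\[
T(X,Y) := \bar\nabla_X Y - \nabla_X Y = \Phi(X)Y + \Phi(Y)X - \Phi(PX)QY - \Phi(PY)QX
\]
actually has the projective form $\Psi(X)Y+\Psi(Y)X$ for some 1-form $\Psi$. The $g$-trace of $T(X,\cdot)$ in its second argument equals $(m+1)\Phi(X)$: the contributions from $-\Phi(PX)QY$ and $-\Phi(PY)QX$ both drop out, the first because $\mathrm{trace}\,Q=0$ by $g$-skew-symmetry of $Q$, and the second because the contraction collapses to $\Phi(PQX)=0$ since $PQ=0$. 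Comparing with the same trace of the projective ansatz $\Psi(X)Y+\Psi(Y)X$, which equals $(m+1)\Psi(X)$, forces $\Psi=\Phi$. Substituting back and setting $Y=X$, projective equivalence reduces to the pointwise identity $\Phi(PX)\,QX=0$ for every $X$. Since $\ker(\Phi\circ P)$ and $\ker Q$ are linear subspaces whose union equals $T_pM$ only if one of them already does, this is equivalent to the pointwise dichotomy: at each point of $M$, either $\Phi\circ P=0$ or $Q=0$.

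Next I would record the algebraic structure underlying this dichotomy. Transposing $PQ=0$ with the help of the $g$-skew-symmetry of $P,Q$ yields $QP=0$, so $\mathrm{Im}\,Q\subseteq\ker P$ and $\mathrm{Im}\,P\subseteq\ker Q$. Combined with $\mathrm{Im}\,P=(\ker P)^{\perp}$ and $\mathrm{Im}\,Q=(\ker Q)^{\perp}$ (orthogonal complements in $g$), one obtains a $g$-orthogonal decomposition
\[
T_pM = V_{11}\oplus V_{10}\oplus V_{01},\qquad V_{11}=\ker P\cap\ker Q,\ V_{10}=\mathrm{Im}\,Q,\ V_{01}=\mathrm{Im}\,P,
\]
on which $P$ restricts to a skew isomorphism of $V_{01}$ and is zero elsewhere (similarly for $Q$ on $V_{10}$). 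The $g$-symmetric operator $A$ defined by $\bar g(\cdot,\cdot)=g(A\cdot,\cdot)$ commutes with both $P$ and $Q$, and hence preserves this decomposition; therefore $V_{11},V_{10},V_{01}$ are mutually $\bar g$-orthogonal as well.

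The analytic content comes from the metrizability condition $\bar\nabla\bar g=0$, equivalently $(\nabla_X\bar g)(Y,Z)=\bar g(T(X,Y),Z)+\bar g(Y,T(X,Z))$. Expanding, the $\Phi(PX)$-contributions cancel by $\bar g$-skew-symmetry of $Q$ and by $PQ=QP=0$, and one arrives at
\[
(\nabla_X\bar g)(Y,Z)=2\Phi(X)\bar g(Y,Z)+\Phi(Y)\bar g(X,Z)+\Phi(Z)\bar g(X,Y)+\Phi(PY)\bar g(X,QZ)+\Phi(PZ)\bar g(X,QY).
\]
The first three terms on the right form the Sinjukov form of a projective pair (cf.\ Example \ref{exmp:projective}); the remaining task is to show the last two ``correction'' terms vanish.

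The hard part is this last step. My plan is to test the identity on the orthogonal pieces. Choosing $Y\in V_{01}$ and $Z\in V_{10}$, the $\Phi(PZ)$ and $QY$ contributions vanish (since $V_{10}\subseteq\ker P$ and $V_{01}\subseteq\ker Q$), and most of the Sinjukov part drops by $g$- and $\bar g$-orthogonality, reducing the equation to a linear relation between $\nabla_X\bar g$, $\Phi|_{V_{01}}$, and $\bar g$ restricted to $V_{10}$. Letting $X$ range through $V_{11}$, $V_{10}$, $V_{01}$ produces a family of first-order identities; combined with the Ricci identity $[\nabla_U,\nabla_V]\bar g=-R(U,V)\cdot\bar g$ (automatic for any genuine metric $\bar g$), these should yield enough algebraic restrictions to force $\Phi|_{V_{01}}=0$ wherever $V_{10}\neq 0$, establishing the required dichotomy.
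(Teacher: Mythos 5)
Your reduction is correct and clean: taking the trace of the difference tensor over its second argument does give $(m+1)\Phi(X)$ (the term $-\Phi(PX)QY$ dies because $\trace Q=0$, the term $-\Phi(PY)QX$ contracts to $-\Phi(PQX)=0$ since $PQ=0$), so projective equivalence is equivalent to the vanishing of $\Phi(PX)QY+\Phi(PY)QX$, hence to the pointwise dichotomy ``$\Phi\circ P=0$ or $Q=0$''. This is exactly the target identified in Remark \ref{rem:mainprojective} of the paper (in the equivalent form $P\Lambda=0$ or $Q=0$), and your orthogonal splitting $T_pM=(\ker P\cap\ker Q)\oplus\mathrm{Im}\,Q\oplus\mathrm{Im}\,P$ from $PQ=QP=0$ is also correct. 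The problem is that everything after that point is a plan, not a proof: ``these should yield enough algebraic restrictions to force $\Phi|_{V_{01}}=0$'' is precisely the hard content of the theorem, and it is not carried out. Moreover, the identity you propose to test, $(\nabla_X\bar g)(Y,Z)=2\Phi(X)\bar g(Y,Z)+\Phi(Y)\bar g(X,Z)+\Phi(Z)\bar g(X,Y)+\Phi(PY)\bar g(X,QZ)+\Phi(PZ)\bar g(X,QY)$, is automatically satisfied by \emph{every} $PQ^{\epsilon}$-projective pair (it is just $\bar\nabla\bar g=0$ rewritten), including, say, $h$-projective pairs where the correction terms genuinely do not vanish; so restricting it to the subspaces $V_{11},V_{10},V_{01}$ cannot by itself kill those terms. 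You would have to locate where $\epsilon=0$ enters beyond the orthogonal splitting, and first-order pointwise information appears insufficient for that.

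For comparison, the paper gets the dichotomy not from the connection equation but from the solution $A$ of the equivalent linear PDE \eqref{eq:main}. The two inputs are: Lemma \ref{lem:dim}, which shows that every non-constant eigenvalue of $A$ has multiplicity $1-\epsilon=1$ --- and whose proof is genuinely dynamical, using that Topalov's integrals $F_c$ are constant along geodesics crossing the level hypersurface $\{\rho=c\}$, not a pointwise algebraic computation; and Lemma \ref{lem:eigenvectors}, which shows $\grad\rho_i$ lies in the $\rho_i$-eigenspace of $A$. Since $P$ commutes with $A$, it preserves each one-dimensional eigenspace $\spann\{\grad\rho_i\}$, and skew-symmetry of $P$ on a one-dimensional invariant subspace forces $P\,\grad\rho_i=0$; summing via \eqref{eq:lambda} gives $P\Lambda=0$, i.e.\ $\Phi\circ P=0$. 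If you want to complete your argument, you need a substitute for this multiplicity-one statement (or for $P\Lambda=0$ directly), and your current toolkit --- the first-order identity plus the Ricci identity --- does not visibly provide one.
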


\subsection{Motivation and open questions}
As it was shown in \cite{Top2003}, $PQ^{\epsilon}$-projectivity of the metrics $g,\bar{g}$ allows us to construct a family of commuting integrals for the geodesic flow of $g$ (see Theorem \ref{thm:integrals} and equation \eqref{eq:integrals} below). The existence of these integrals is an interesting phenomenon on its own. Besides, it appeared to be a powerful tool in the study of projectively equivalent and $h$-projectively equivalent metrics (Examples \ref{exmp:projective},\ref{exmp:hprojective}), see \cite{Kiyohara2010,Inventiones2003,MT2001,MatRos}. Moreover, in \cite{Top2003} it was shown that given one pair of $PQ^{\epsilon}$-projective metrics, one can construct an infinite family of $PQ^{\epsilon}$-projective metrics. Under some non-degeneracy condition, this gives rise to an infinite family of integrable flows. 

From the other side, the theories of projectively equivalent and $h$-projectively equivalent metrics appeared to be very useful mathematical theories of deep interest.

The results in our paper suggest to look for other examples in the case when $\epsilon=-1,-3,-5,...$. If $\epsilon=-1$ but $P^{2}\neq-Id$, a lot of examples can be constructed using the "hierarchy construction" from \cite{Top2003}. It is interesting to ask whether every pair of $PQ^{-1}$-projective metrics is in the hierarchy of some $h$-projectively equivalent metrics.

Another attractive problem is to find interesting examples for $\epsilon=-3,-5,...$. Besides the relation to integrable systems provided by \cite{Top2003}, one could find other branches of differential geometry of similar interest as projective or $h$-projective geometry.

\subsection{PDE for $PQ^{\epsilon}$-projectivity}
Given a pair of Riemannian metrics $g,\bar{g}$ and tensors $P,Q$ satisfying \eqref{eq:conditions}, we introduce the $(1,1)$-tensor $A=A(g,\bar{g})$ defined by
\begin{align}
A=\left(\frac{\mathrm{det}\,\bar{g}}{\det{g}}\right)^{\frac{1}{m+1-\epsilon}}\bar{g}^{-1}g.\label{eq:defA}
\end{align}
Here we view the metrics as vector bundle isomorphisms $g:TM\rightarrow T^{*}M$ and $\bar{g}^{-1}:T^{*}M\rightarrow TM$. We see that $A$ is non-degenerate and self-adjoint with respect to $g$ and $\bar{g}$. Moreover $A$ commutes with $P$ and $Q$.
\begin{thm}[\cite{Top2003}]\label{thm:Top}
Two metrics $g$ and $\bar{g}$ are $PQ^{\epsilon}$-projective if for a certain vectorfield $\Lambda$, the $(1,1)$-tensor $A$ defined in \eqref{eq:defA} is a solution of 
\begin{align}
(\nabla_{X}A)Y=g(Y,X)\Lambda+g(Y,\Lambda)X+g(Y,QX)P\Lambda+g(Y,P\Lambda)QX\mbox{ for all }X,Y\in TM.\label{eq:main}
\end{align}
Conversely, if $A$ is a $g$-self-adjoint positive solution of \eqref{eq:main} which commutes with $P$ and $Q$, the Riemannian metric
$$\bar{g}=(\mathrm{det}\,A)^{-\frac{1}{1-\epsilon}}gA^{-1}$$
is $PQ^{\epsilon}$-projective to $g$.
\end{thm}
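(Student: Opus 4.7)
The proof is a direct computation; the two directions essentially reverse each other, so I would focus on the forward direction first. The starting point is that \eqref{eq:defA} is equivalent to $\bar{g}(AX,Y) = \phi\, g(X,Y)$ with $\phi := (\det\bar{g}/\det g)^{1/(m+1-\epsilon)}$. The properties of $A$ ($g$-self-adjointness, positivity, commutation with $P$ and $Q$) then follow immediately from the antisymmetries \eqref{eq:conditions}: for instance $\bar{g}(APX,Y) = \phi g(PX,Y) = -\phi g(X,PY) = -\bar{g}(AX,PY) = \bar{g}(PAX,Y)$ gives $AP = PA$.

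I would next express $\nabla_Z\bar{g}$ in terms of $\Phi$ using $\bar{\nabla}\bar{g}=0$ and the difference tensor $T := \bar{\nabla}-\nabla$ from \eqref{eq:pqproj}, giving $(\nabla_Z\bar{g})(X,Y) = \bar{g}(T(Z,X),Y) + \bar{g}(X,T(Z,Y))$. After substitution and use of the $\bar{g}$-antisymmetry of $Q$, the $\Phi(PZ)$-contributions cancel. A crucial auxiliary step is to take traces: since $\trace Q = 0$ and $\trace PQ = \epsilon m$, the contracted difference of Christoffel symbols works out to $\bar{\Gamma}^a_{ac} - \Gamma^a_{ac} = (m+1-\epsilon)\Phi_c$, hence $d\phi = 2\phi\,\Phi$. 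This explains the exponent $1/(m+1-\epsilon)$ in \eqref{eq:defA} and the hypothesis $\epsilon \neq m+1$.

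Differentiating $\bar{g}(AX,Y) = \phi g(X,Y)$ then yields
\[
\bar{g}((\nabla_Z A)X, Y) \;=\; d\phi(Z)\,g(X,Y) \;-\; (\nabla_Z\bar{g})(AX, Y).
\]
Since $(\nabla_Z\bar{g})(AX,Y)$ contains the term $2\Phi(Z)\bar{g}(AX,Y) = 2\phi\Phi(Z)g(X,Y)$ and $d\phi(Z)g(X,Y) = 2\phi\Phi(Z)g(X,Y)$, these cancel exactly, leaving four terms. Setting $\Lambda := -A\Phi^\sharp$ (with $\Phi^\sharp$ the $g$-dual of $\Phi$) and using $AP = PA$, the $g$-antisymmetry of $P$, and the $\bar{g}$-antisymmetry of $Q$, each of the four surviving terms matches a corresponding term in $\bar{g}$ applied to the right-hand side of \eqref{eq:main}. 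As a sanity check, the trace of \eqref{eq:main} reads $d(\trace A) = 2(1-\epsilon)g(\Lambda, \cdot)$ (the factor $1-\epsilon$ coming from $g(P\Lambda, QX) = -\epsilon g(\Lambda, X)$), which pins down $\Lambda$ uniquely from $A$ and uses the assumption $\epsilon \neq 1$.

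The converse direction runs the same computation in reverse: given a positive $g$-self-adjoint $A$ commuting with $P, Q$ and satisfying \eqref{eq:main}, set $\bar{g} := \phi\, g A^{-1}$ with $\phi := (\det A)^{-1/(1-\epsilon)}$, which is Riemannian by positivity of $A$ and consistent with \eqref{eq:defA} because a short check gives $\det A = \phi^{\epsilon-1}$. Defining $\Phi^\sharp := -A^{-1}\Lambda$, one verifies that the connection $\nabla + T_\Phi$, with $T_\Phi$ the right-hand side of \eqref{eq:pqproj}, is torsion-free and annihilates $\bar{g}$, so by uniqueness of the Levi-Civita connection it equals $\bar{\nabla}$. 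The main obstacle throughout is the careful bookkeeping: \eqref{eq:pqproj} has four terms each appearing twice in $\nabla\bar{g}$, and the cancellations producing the clean four-term right-hand side of \eqref{eq:main} hinge on the precise normalisations $(\det\bar{g}/\det g)^{1/(m+1-\epsilon)}$ and $(\det A)^{-1/(1-\epsilon)}$, which are themselves forced by the two trace identities above.
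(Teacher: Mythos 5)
The paper does not prove Theorem \ref{thm:Top}; it is imported from \cite{Top2003}, so there is no in-paper argument to compare against. Your derivation is correct and is the standard one: the identities $\bar g(AX,Y)=\phi\, g(X,Y)$, $d\phi=2\phi\,\Phi$ from the contracted Christoffel difference, and $\Lambda=-A\Phi^{\sharp}$ (consistent with Remark \ref{rem:mainprojective}) produce exactly the four terms of \eqref{eq:main}, and your trace check reproduces \eqref{eq:lambda}. The only nitpick is cosmetic: the contraction $-\Phi_d P^d_a Q^a_c=-\epsilon\Phi_c$ uses $PQ=\epsilon\,\Id$ itself rather than merely $\trace(PQ)=\epsilon m$, and in the converse you should also record that $P,Q$ remain skew with respect to $\bar g=\phi\, gA^{-1}$ (immediate from $AP=PA$, $AQ=QA$ and $g$-skewness), since that is part of \eqref{eq:conditions}.
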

\begin{rem}\label{rem:lambda}
Taking the trace of the $(1,1)$-tensors in equation \eqref{eq:main} acting on the vector field $Y$, we obtain 
\begin{align}
\Lambda=\frac{1}{2(1-\epsilon)}\mathrm{grad}\,\mathrm{trace}\,A,\label{eq:lambda}
\end{align}
hence, \eqref{eq:main} is a linear first order PDE on the $(1,1)$-tensor $A$.
\end{rem}
\begin{rem}\label{rem:affine}
From Theorem \ref{thm:Top} it follows that the metrics $g,\bar{g}$ are affinely equivalent if and only if $\Lambda\equiv 0$ on the whole $M$. 
\end{rem}
\begin{rem}\label{rem:mainprojective}
The relation between the $1$-form $\Phi$ in \eqref{eq:pqproj} and the vectorfield $\Lambda$ in \eqref{eq:main} is given by $\Lambda=-Ag^{-1}\Phi$ (again $g^{-1}:T^{*}M\rightarrow TM$ is considered as a bundle isomorphism), see \cite{Top2003}. Recall from Example \ref{exmp:projective} that projective equivalence is a special case of $PQ^{\epsilon}$-projectivity with $\Phi(P.)=0$ or $Q=0$ and $\epsilon=0$. In view of Theorem \ref{thm:Top}, we now have that $g$ and $\bar{g}$ are projectively equivalent if and only if $A=A(g,\bar{g})$ given by \eqref{eq:defA} (with $\epsilon=0$), satisfies \eqref{eq:main} with $P\Lambda=0$ or $Q=0$, i.e.,
\begin{align}
(\nabla_{X}A)Y=g(Y,X)\Lambda+g(Y,\Lambda)X\mbox{ for all }X,Y\in TM.\label{eq:mainprojective}
\end{align}
\end{rem}

\section{Proof of the results}

\subsection{Topalov`s integrals}
We first recall
\begin{thm}[\cite{Top2003}]\label{thm:integrals}
Let $g$ and $\bar{g}$ be $PQ^{\epsilon}$-projective metrics and let $A$ be defined by \eqref{eq:defA}. We identify $TM$ with $T^{*}M$ by $g$, and consider the canonical symplectic structure on $TM\cong T^{*}M$. Then the functions $F_{t}:TM\rightarrow \mathbb{R}$, 
\begin{align}
F_{t}(X)=|\mathrm{det}\,(A-tId)|^{\frac{1}{1-\epsilon}}g((A-tId)^{-1}X,X),\,\,\,X\in TM\label{eq:integrals}
\end{align}
are commuting quadratic integrals for the geodesic flow of $g$.
\end{thm}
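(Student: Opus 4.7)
The plan is to verify directly that the geodesic Hamiltonian $H(X)=\frac{1}{2}g(X,X)$ Poisson-commutes with every $F_t$, and that $\{F_t,F_s\}=0$ for all parameter values $s,t$ at which the formulas are regular. Write $F_t(X)=\phi_t\,g(B_tX,X)$ with $B_t=(A-t\,\Id)^{-1}$ and $\phi_t=|\det(A-t\,\Id)|^{\frac{1}{1-\epsilon}}$. Both $A$ and $B_t$ are $g$-self-adjoint and commute with $P$ and $Q$, so $F_t$ is a well-defined quadratic form on fibres of $TM$. By the standard characterization of quadratic first integrals of the geodesic flow of $g$, the identity $\{F_t,H\}=0$ is equivalent to $g((\nabla_X K_t)X,X)=0$ for every vector field $X$, where $K_t=\phi_tB_t$.

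To check this first identity I would apply the product rule, using Jacobi's formula $\nabla_X\phi_t=\frac{\phi_t}{1-\epsilon}\trace(B_t\nabla_X A)$ and $\nabla_X B_t=-B_t(\nabla_X A)B_t$, and then substitute the PDE \eqref{eq:main} for $\nabla_X A$. Computing the trace using that $P,Q$ commute with $B_t$, are $g$-skew, and satisfy $PQ=\epsilon\,\Id$, one finds $\trace(B_t\nabla_X A)=2(1-\epsilon)g(B_tX,\Lambda)$; the factor $1-\epsilon$ precisely cancels the $\frac{1}{1-\epsilon}$ in $\nabla_X\phi_t$, giving $\nabla_X\phi_t=2\phi_t g(B_tX,\Lambda)$. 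On the other side, applying \eqref{eq:main} with $Y=B_tX$ and pairing with $B_tX$, the $P\Lambda$- and $QX$-contributions vanish because $g(B_tX,Q(A-t\,\Id)B_tX)=g(B_tX,QX)=0$ by $g$-skewness of $Q$ and its commutation with $A$. The two remaining terms then cancel against $\nabla_X\phi_t\cdot B_t$, proving $\{F_t,H\}=0$.

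For the mutual commutativity $\{F_t,F_s\}=0$ I would exploit the resolvent identity $B_t-B_s=(t-s)B_tB_s$, which factors out the $(s,t)$-dependence and reduces the Poisson bracket to an expression whose vanishing follows from a double application of \eqref{eq:main} combined with the same skew-symmetry and commutation arguments as above. Alternatively, after expanding $F_t$ into a finite family of integrals indexed by coefficients of the characteristic polynomial of $A$, pairwise commutativity can be established by an inductive Killing-tensor argument based on the result of the second paragraph.

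The main obstacle is this second step: whereas $\{F_t,H\}=0$ only needs one application of the PDE together with a single symmetrization, proving $\{F_t,F_s\}=0$ requires a double application of \eqref{eq:main} and a finer bookkeeping of the $P$- and $Q$-terms, which now mix through both resolvents $B_t$ and $B_s$. The specific exponent $\frac{1}{1-\epsilon}$ used in \eqref{eq:defA} and \eqref{eq:integrals} is exactly the one for which all unwanted $\epsilon$-dependent contributions conspire to cancel.
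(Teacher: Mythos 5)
The paper itself gives no proof of this statement: Theorem~\ref{thm:integrals} is recalled verbatim from \cite{Top2003}, so there is no internal argument to compare yours against. Judged on its own terms, the first half of your proposal --- that each $F_t$ is a first integral --- is correct and essentially complete. The reduction of $\{F_t,H\}=0$ to the Killing condition $g((\nabla_XK_t)X,X)=0$ for $K_t=\phi_tB_t$ is standard; your trace identity $\trace(B_t\nabla_XA)=2(1-\epsilon)\,g(B_tX,\Lambda)$ checks out (one needs $QP=\epsilon\,\Id$, which indeed follows from $PQ=\epsilon\,\Id$ together with the $g$-skewness of $P$ and $Q$), so $\nabla_X\phi_t=2\phi_t\,g(B_tX,\Lambda)$; and substituting \eqref{eq:main} with $Y=B_tX$ and pairing with $B_tX$ leaves exactly $2g(B_tX,X)g(B_tX,\Lambda)$ after the $P,Q$-terms are killed by $g(B_tX,QX)=0$. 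All of this is sound.

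The gap is the commutativity, which is part of the statement, and for which you offer only a plan that you yourself flag as the main obstacle. The bracket $\{F_t,F_s\}$ is cubic in the fibre variable, and its vanishing is equivalent to the vanishing of the symmetrized Schouten-type expression $K_t^{d(a}\nabla_dK_s^{bc)}-K_s^{d(a}\nabla_dK_t^{bc)}$; after substituting \eqref{eq:main} into both $\nabla K_t$ and $\nabla K_s$ one must track cross-terms in which $\Lambda$, $P\Lambda$ and $QX$ are contracted through \emph{both} resolvents $B_t,B_s$ and through the derivatives of \emph{both} conformal factors $\phi_t,\phi_s$. Some of these cancellations do generalize from your first computation (for instance $g(CX,QX)=0$ for any $g$-self-adjoint $C$ commuting with $Q$ disposes of several of them), but they are not automatic consequences of anything you have written, and the resolvent identity $B_t-B_s=(t-s)B_tB_s$ by itself only reorganizes the terms. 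Until this second computation is actually carried out --- or replaced by an explicit involutivity argument for the finite family of quadratic integrals obtained by expanding $F_t$ in $t$ --- the proof of the theorem as stated is incomplete.
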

\begin{rem}
Note that the function $F_{t}$ in equation \eqref{eq:integrals} is not defined in the points $x\in M$ such that $t\in \mathrm{spec}\,A_{|x}$. From the proof of Theorem \ref{thm:result1} it will be clear that in the non-trivial case one can extend the functions $F_{t}$ to these points as well. 
\end{rem}

\subsection{Proof of Theorem \ref{thm:result1}}
Suppose that $g$ and $\bar{g}$ are $PQ^{\epsilon}$-projective Riemannian metrics and let $A=A(g,\bar{g})$ be the corresponding solution of \eqref{eq:main} defined by \eqref{eq:defA}.
Since $A$ is self-adjoint with respect to the positively-definite metric $g$, the eigenvalues of $A$ in every point $x\in M$ are real numbers. We denote them by $\mu_{1}(x)\leq...\leq\mu_{m}(x)$; depending on the multiplicity, some of the eigenvalues might coincide. The functions $\mu_{i}$ are continuous on $M$. Denote by $M^{0}\subseteq M$ the set of points where the number of different eigenvalues of $A$ is maximal on $M$. Since the functions $\mu_{i}$ are continuous, $M^{0}$ is open in $M$. Moreover, it was shown in \cite{Top2003} that $M^{0}$ is dense in $M$ as well. The implicit function theorem now implies that $\mu_{i}$ are differentiable functions on $M^{0}$. 

From Remark \ref{rem:affine} and equation \eqref{eq:lambda} we immediately obtain that $g$ and $\bar{g}$ are affinely equivalent, if and only if all eigenvalues of $A$ are constant. Suppose that $g$ and $\bar{g}$ are not affinely equivalent, that is, there is a non-constant eigenvalue $\rho$ of $A$ with multiplicity $k\geq1$. Let us choose a point $x_{0}\in M^{0}$ such that $d\rho_{|x_{0}}\neq0$, define $c:=\rho(x_{0})$ and consider the hypersurface $H=\{x\in U:\rho(x)=c\}$, where $U\subseteq M^{0}$ is a geodesically convex neighborhood of $x_{0}$. We think that $U$ is sufficiently small such that $\mu(x)\neq c$ for all eigenvalues $\mu$ of $A$ different from $\rho$ and all $x\in U$.
\begin{lem}\label{lem:tensor}
There is a smooth nowhere vanishing $(0,2)$-tensor $T$ on $U$ such that on $U\setminus H$, $T$ coincides with
\begin{align}
\mathrm{sgn}(\rho-c)|\mathrm{det}\,(A-cId)|^{\frac{1}{k}}g((A-cId)^{-1}.,.).\label{eq:tensor}
\end{align}
\end{lem}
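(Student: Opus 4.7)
The strategy is to use the spectral decomposition of $A$ on $U$ to show that the pole of $(A-cId)^{-1}$ along $H$ is exactly canceled by the vanishing of $\mathrm{sgn}(\rho-c)|\mathrm{det}(A-cId)|^{1/k}$, leaving behind a smooth tensor which is still nonzero on $H$ because its restriction to the $\rho$-eigenspace is a positive multiple of $g|_{E_\rho}$.

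First, since $U\subseteq M^{0}$, the multiplicities of the eigenvalues of $A$ are constant on $U$, so the $\rho$-eigenspace $E_{\rho}\subseteq TM|_{U}$ is a smooth rank-$k$ subbundle. Its $g$-orthogonal complement $E_{\rho}^{\perp}$ (the sum of the other eigenspaces of $A$, which is automatically $g$-orthogonal by self-adjointness) is also smooth. Let $\Pi_{\rho}$ and $\Pi^{\perp}=\Id-\Pi_{\rho}$ denote the corresponding smooth projections. Because every eigenvalue $\mu\neq\rho$ satisfies $\mu\neq c$ on $U$, the restriction of $A-cId$ to $E_{\rho}^{\perp}$ is invertible, so $R:=(A-cId)^{-1}\Pi^{\perp}$ is a smooth $(1,1)$-tensor on all of $U$.

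Second, I would factor the characteristic polynomial:
\begin{equation*}
\mathrm{det}(A-cId)=(\rho-c)^{k}\cdot Q,\qquad Q=\prod_{\mu\neq\rho}(\mu-c)^{d_{\mu}},
\end{equation*}
where $d_{\mu}$ is the multiplicity of the eigenvalue $\mu$. On $U$ the continuous function $Q$ is nowhere zero, and after possibly shrinking $U$ it has constant sign, so $|Q|^{1/k}$ is smooth and nowhere vanishing. Consequently $\mathrm{sgn}(\rho-c)\,|\mathrm{det}(A-cId)|^{1/k}=(\rho-c)\,|Q|^{1/k}$ is a smooth function on $U$ whose zero set is exactly $H$.

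Finally, on $U\setminus H$ one has $(A-cId)^{-1}=\tfrac{1}{\rho-c}\Pi_{\rho}+R$, and multiplying out gives that the tensor in \eqref{eq:tensor} equals
\begin{equation*}
T:=|Q|^{1/k}\,g(\Pi_{\rho}\cdot,\cdot)\;+\;(\rho-c)\,|Q|^{1/k}\,g(R\cdot,\cdot).
\end{equation*}
The right-hand side is manifestly a smooth $(0,2)$-tensor on all of $U$; this is taken as the definition of $T$. At a point of $H$ the second term vanishes, while the first restricts on $E_{\rho}\times E_{\rho}$ to $|Q|^{1/k}\,g|_{E_{\rho}}$, which is positive definite and in particular nonzero; off $H$ the same first term already shows $T\neq 0$. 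The only nontrivial points are the smoothness of the subbundle $E_{\rho}$ and of $|Q|^{1/k}$, both of which are secured by working in $M^{0}$ (constant multiplicity) and in a sufficiently small connected neighborhood $U$; the rest is bookkeeping with the spectral decomposition.
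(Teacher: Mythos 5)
Your proof is correct and follows essentially the same route as the paper's: both exploit the spectral decomposition of $A$ to see that the factor $\mathrm{sgn}(\rho-c)\,|\mathrm{det}(A-cId)|^{1/k}=(\rho-c)\cdot(\text{smooth nonvanishing})$ exactly cancels the pole of $(A-cId)^{-1}$ on the $\rho$-eigenspace, leaving a tensor that restricts to a positive multiple of $g$ on $E_{\rho}$. The only difference is presentational: the paper works in an adapted orthonormal eigenframe and reads everything off the diagonal matrix, whereas you phrase the same computation invariantly with the projectors $\Pi_{\rho}$ and $\Pi^{\perp}$.
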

\begin{proof}
Let us denote by $\rho=\rho_{1},\rho_{2},...,\rho_{r}$ the different eigenvalues of $A$ on $M^{0}$ with multiplicities $k=k_{1},k_{2},...,k_{r}$ respectively. Since the eigenspace distributions of $A$ are differentiable on $M^{0}$, we can choose a local frame $\{U_{1},...,U_{m}\}$ on $U$, such that $g$ and $A$ are given by the matrices 
$$g=\mathrm{diag}(1,...,1)\mbox{ and }A=\mathrm{diag}(\underbrace{\rho,...,\rho}_{k \mbox{ \tiny times}},...,\underbrace{\rho_{r},...,\rho_{r}}_{k_{r} \mbox{ \tiny times}})$$
with respect to this frame. The tensor \eqref{eq:tensor} can now be written as
\begin{align}
&\mathrm{sgn}(\rho-c)|\mathrm{det}\,(A-cId)|^{\frac{1}{k}}g(A-cId)^{-1}=\nonumber\\
&=(\rho-c)\prod_{i=2}^{r}|\rho_{i}-c|^{\frac{k_{i}}{k}}\mathrm{diag}\Big(\underbrace{\frac{1}{\rho-c},...,\frac{1}{\rho-c}}_{k \mbox{ \tiny times}},...,\underbrace{\frac{1}{\rho_{r}-c},...,\frac{1}{\rho_{r}-c}}_{k_{r} \mbox{ \tiny times}}\Big)=\nonumber\\
&=\prod_{i=2}^{r}|\rho_{i}-c|^{\frac{k_{i}}{k}}\mathrm{diag}\Big(\underbrace{1,...,1}_{k \mbox{ \tiny times}},...,\underbrace{\frac{\rho-c}{\rho_{r}-c},...,\frac{\rho-c}{\rho_{r}-c}}_{k_{r} \mbox{ \tiny times}}\Big).\label{eq:tensorframe}
\end{align}
Since $\rho_{i}\neq c$ on $U\subseteq M^{0}$ for $i=2,...,r$, we see that \eqref{eq:tensorframe} is a smooth nowhere vanishing $(0,2)$-tensor on $U$.
\end{proof}
\begin{lem}\label{lem:dim}
The multiplicity of the non-constant eigenvalues of $A$ is equal to $1-\epsilon$.
\end{lem}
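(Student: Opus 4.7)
The plan is to localize the main equation \eqref{eq:main} to the $\rho$-eigendistribution $V_\rho$ and then take a trace over $V_\rho$ to extract a scalar identity relating $k$ and $\epsilon$. Throughout I will use the crucial structural fact that $A$, $P$ and $Q$ all preserve the $g$-orthogonal decomposition $TM = \bigoplus_j V_{\rho_j}$: $A$ does so by $g$-self-adjointness, while $P$ and $Q$ do so because they commute with $A$.

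First I would show that $\grad\rho$ lies in $V_\rho$ on $M^{0}$. For a local eigenvector field $Y \in V_\rho$ one has
\[ (\nabla_X A)Y = (X\rho)\,Y + (\rho\,\Id - A)\nabla_X Y, \]
whose first summand lies in $V_\rho$ and whose second lies in $V_\rho^\perp$. Plugging into \eqref{eq:main} with $X \perp V_\rho$ and projecting onto $V_\rho$, the four terms on the right are either killed because $Y \perp X$ and $Y \perp QX$ (using $QX \in V_\rho^\perp$) or lie in $V_\rho^\perp$. Matching the $V_\rho$-parts forces $X\rho = 0$ for every $X \perp V_\rho$, hence $\grad\rho \in V_\rho$.

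Next I would project \eqref{eq:main} onto $V_\rho$ with $X,Y \in V_\rho$. Because $\grad\,\trace A = \sum_j k_j\grad\rho_j$ and each $\grad\rho_j \in V_{\rho_j}$ by the previous step, the $V_\rho$-component of $\Lambda$ equals $\alpha\,\grad\rho$ with $\alpha := \frac{k}{2(1-\epsilon)}$. Using also that $P$ and $Q$ preserve $V_\rho$, projection of \eqref{eq:main} onto $V_\rho$ simplifies to the pointwise identity
\[ (X\rho)\,Y = \alpha\,g(Y,X)\grad\rho + \alpha(Y\rho)\,X + \alpha\,g(Y,QX)\,P\grad\rho + \alpha\,g(Y,P\grad\rho)\,QX \]
valid for all $X,Y \in V_\rho$.

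Finally I would trace this identity over $V_\rho$ at $x_0$: pick a $g$-orthonormal basis $e_1,\dots,e_k$ of $V_\rho$, set $X = Y = e_a$, and sum. The left side becomes $\grad\rho$ since $\grad\rho \in V_\rho$. The first, second, and fourth terms on the right produce $\alpha k\,\grad\rho$, $\alpha\,\grad\rho$, and $\alpha\epsilon\,\grad\rho$ respectively (the last via $\sum_a g(e_a,P\grad\rho)Qe_a = QP\grad\rho = \epsilon\grad\rho$, using $QP = PQ = \epsilon\,\Id$ and $P\grad\rho \in V_\rho$), while the third vanishes because $\sum_a g(e_a,Qe_a) = \trace(Q|_{V_\rho}) = 0$ by $g$-skew-symmetry of $Q$. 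Since $\grad\rho(x_0) \neq 0$, comparing coefficients yields $1 = \alpha(k+1+\epsilon)$, which rearranges to $k^2 + (1+\epsilon)k - 2(1-\epsilon) = 0$ and factors as $(k-(1-\epsilon))(k+2) = 0$; since $k\geq 1$, only $k = 1-\epsilon$ is admissible. The only delicate point is the bookkeeping of which pieces of \eqref{eq:main} sit in $V_\rho$ versus $V_\rho^\perp$; once the joint invariance of $g$, $A$, $P$, $Q$ under the eigendecomposition is fully exploited, the remaining calculation is routine.
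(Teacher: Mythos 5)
Your proof is correct, but it follows a genuinely different route from the paper. The paper's argument is dynamical: it writes the Topalov integral $F_{c}$ as $f_{c}\cdot T(X,X)$ with $T$ a nowhere vanishing tensor (Lemma \ref{lem:tensor}) and $f_{c}=\mathrm{sgn}(\rho-c)|\det(A-c\Id)|^{\frac{1}{1-\epsilon}-\frac{1}{k}}$, and then rules out a nonzero exponent by letting geodesics approach or leave the hypersurface $H=\{\rho=c\}$ (the integral would have to vanish identically, respectively blow up, contradicting its constancy along geodesics). You instead work purely locally and algebraically with the PDE \eqref{eq:main}: you project onto the $\rho$-eigenspace and trace, and along the way you re-derive the fact that $\grad\rho\in V_{\rho}$, which is exactly the paper's Lemma \ref{lem:eigenvectors}, proved there only later for Theorem \ref{thm:result2}. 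I checked your computation: the projected identity is right, the trace gives $2(1-\epsilon)=k(k+1+\epsilon)$, and the factorization $(k-(1-\epsilon))(k+2)=0$ with $k\geq 1$ yields $k=1-\epsilon$; the sanity checks $\epsilon=0\Rightarrow k=1$ and $\epsilon=-1\Rightarrow k=2$ match the classical projective and $h$-projective cases. The only step you assert without justification is $QP=\epsilon\,\Id$; the hypothesis is only $PQ=\epsilon\,\Id$, but taking $g$-adjoints and using the skew-symmetry of $P$ and $Q$ gives $QP=(PQ)^{*}=\epsilon\,\Id$, so this is a one-line fix. Your approach buys elementarity and locality: it needs neither the integrals of Theorem \ref{thm:integrals} nor geodesic convexity, and it unifies Lemmas \ref{lem:dim} and \ref{lem:eigenvectors} into one computation. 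The paper's approach, on the other hand, explains \emph{why} the exponent in $F_{t}$ must be an integer reciprocal relation (it is exactly what makes the integrals extend continuously across the set where $t\in\mathrm{spec}\,A$, as noted in the remark after Theorem \ref{thm:integrals}), a byproduct your argument only recovers a posteriori.
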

\begin{proof}
Let us consider the integral $F_{c}:TM\rightarrow \mathbb{R}$ defined in equation \eqref{eq:integrals}. Using the tensor $T$ from Lemma \ref{lem:tensor}, we can write $F_{c}$ as 
\begin{align}
F_{c}(X)=\underbrace{\mathrm{sgn}(\rho-c)|\mathrm{det}\,(A-cId)|^{\frac{1}{1-\epsilon}-\frac{1}{k}}}_{=:f_{c}}T(X,X),\,\,\,X\in TM.\label{eq:integraltensor}
\end{align}
Our goal is to show that $\frac{1}{1-\epsilon}-\frac{1}{k}=0$. 
\begin{figure}
  \includegraphics[width=.3\textwidth]{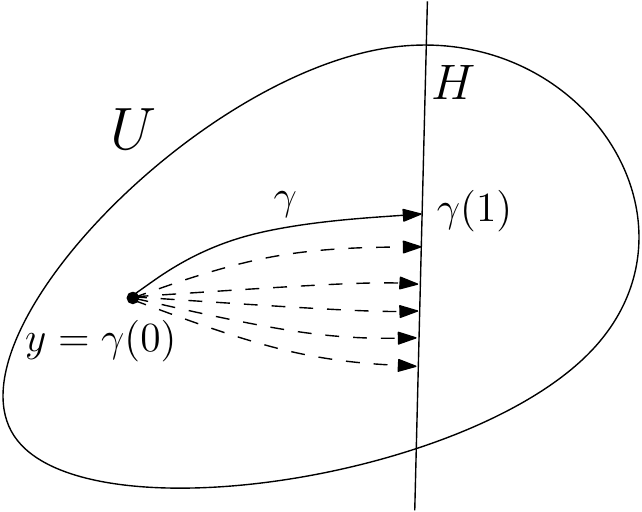}
  \caption{Case $\frac{1}{1-\epsilon}-\frac{1}{k}>0$: We connect the point $y\in U\setminus H$ with the points in $H$ by geodesics. The value of the integral $F_{c}$ is zero on each of these geodesics.}\label{pic1}
\end{figure}

First suppose that $\frac{1}{1-\epsilon}-\frac{1}{k}>0$ and let be $y\in U\setminus H$. We choose a geodesic $\gamma:[0,1]\rightarrow U$ such that $y=\gamma(0)$ and $\gamma(1)\in H$, see figure \ref{pic1}. Since $\rho(\gamma(t))\stackrel{t\to 1}{\longrightarrow}c$, we see from equation \eqref{eq:integraltensor} that $f_{c}(\gamma(t))\stackrel{t\to 1}{\longrightarrow} 0$.
It follows that $F_{c}(\dot{\gamma}(t))\stackrel{t\to 1}{\longrightarrow}0$. On the other hand, since $F_{c}$ is an integral for the geodesic flow of $g$ (see Theorem \ref{thm:Top}), the value $F_{c}(\dot{\gamma}(t))$ is independent of $t$ and, hence, $F_{c}(\dot{\gamma}(0))=0$. We have shown that $F_{c}(\dot{\gamma}(0))=0$ for all initial velocities $\dot{\gamma}(0)\in T_{y}M$ of geodesics connecting $y$ with points of $H$.
Since $H$ is a hypersurface, it follows that the quadric $\{X\in T_{y}M:F_{c}(X)=0\}$ contains an open subset which implies that $F_{c}\equiv 0$ on $T_{y}M$. This is a contradiction to Lemma \ref{lem:tensor}, since $T$ is non-vanishing in $y$. We obtain that $\frac{1}{1-\epsilon}-\frac{1}{k}\leq0$.
\begin{figure}
  \includegraphics[width=.3\textwidth]{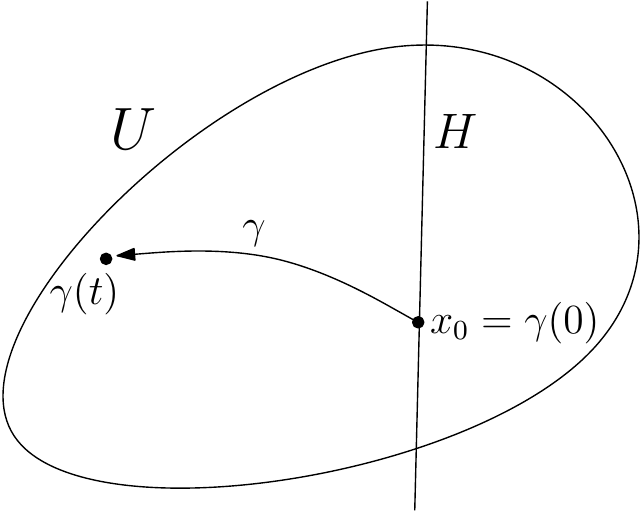}
  \caption{Case $\frac{1}{1-\epsilon}-\frac{1}{k}<0$: For any geodesic $\gamma$ starting in $x_{0}\in H$ and leaving $H$, the value of the integral $F_{c}$ along this geodesic is infinite.}\label{pic2}
\end{figure}

Let us now treat the case when $\frac{1}{1-\epsilon}-\frac{1}{k}<0$. We choose a vector $X\in T_{x_{0}}M$ which is not tangent to $H$ and satisfies $T(X,X)\neq0$. Such a vector exists, since $T_{x_{0}}M\setminus T_{x_{0}}H$ is open in $T_{x_{0}}M$ and $T$ is not identically zero on $T_{x_{0}}M$ by Lemma \ref{lem:tensor}. 
Let us consider the geodesic $\gamma$ with $\gamma(0)=x_{0}$ and $\dot{\gamma}(0)=X$, see figure \ref{pic2}. Since $X\notin T_{x_{0}}H$, the geodesic $\gamma$ has to leave $H$ for $t>0$. In a point $\gamma(t)\in U\setminus H$ the value $F_{c}(\dot{\gamma}(t))$ will be finite. On the other hand, since $f_{c}(\gamma(t))\stackrel{t\to 0}{\longrightarrow} \infty$ and $T(\dot{\gamma}(0),\dot{\gamma}(0))\neq0$, we have $F_{c}(\dot{\gamma}(t))\stackrel{t\to 0}{\longrightarrow} \infty$. Again this contradicts the fact that the value of $F_{c}$ must remain constant along $\dot{\gamma}$ by Theorem \ref{thm:Top}. We have shown that $\frac{1}{1-\epsilon}-\frac{1}{k}=0$ and finally, Lemma \ref{lem:dim} is proven.
\end{proof}
As a consequence of Lemma \ref{lem:dim}, if the metrics $g,\bar{g}$ are not affinely equivalent (i.e., at least one eigenvalue of $A$ is non-constant), $\epsilon$ is an integer less or equal to zero. If $\epsilon\neq0$, the condition $PQ=\epsilon Id$ in \eqref{eq:conditions} implies that $P$ is non-degenerate and by the first condition in \eqref{eq:conditions}, $g(P.,.)$ is a non-degenerate $2$-form on each eigenspace of $A$ (note that $A$ and $P$ commute). This implies that for $\epsilon\neq0$ the eigenspaces of $A$ have even dimension, in particular, $1-\epsilon\in\{2,4,6,8,...\}$. Theorem \ref{thm:result1} is proven.   
\subsection{Proof of Theorem \ref{thm:result2}}
Let $g,\bar{g}$ be two $PQ^{\epsilon}$-projective metrics and let $A$ be the corresponding solution of equation \eqref{eq:main} defined by \eqref{eq:defA}. 
As it was already stated in the proof of Theorem \ref{thm:result1}, the eigenspace distributions of $A$ are differentiable in a neighborhood of almost every point of $M$. First let us prove 
\begin{lem}\label{lem:eigenvectors}
Let $X$ be an eigenvector of $A$ corresponding to the eigenvalue $\rho$. If $\mu$ is another eigenvalue of $A$ and $\rho\neq\mu$, then $X(\mu)=0$. In particular, $\mathrm{grad}\,\mu$ is an eigenvector of $A$ corresponding to the eigenvalue $\mu$.
\end{lem}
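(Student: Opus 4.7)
The plan is to evaluate \eqref{eq:main} on $X\in E_{\rho}$ and on a unit vector $Y\in E_{\mu}$ with $\rho\neq\mu$, compute $g((\nabla_{X}A)Y,Y)$ in two different ways, and compare. The first computation differentiates the eigenvector identity $AY=\mu Y$ along $X$, giving
\[
(\nabla_{X}A)Y=X(\mu)\,Y+(\mu\Id-A)\nabla_{X}Y.
\]
Because $A$ is $g$-self-adjoint, $g((\mu\Id-A)\nabla_{X}Y,Y)=g(\nabla_{X}Y,(\mu\Id-A)Y)=0$, so pairing the above with $Y$ and using $g(Y,Y)=1$ leaves $g((\nabla_{X}A)Y,Y)=X(\mu)$.

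The second computation pairs the right-hand side of \eqref{eq:main} with $Y$. Each of the four summands is a product that contains either $g(X,Y)$ or $g(QX,Y)$ as one factor. The self-adjointness of $A$ makes its eigenspaces for distinct eigenvalues $g$-orthogonal, so $g(X,Y)=0$; and since the paper records that $Q$ commutes with $A$, the vector $QX$ again belongs to $E_{\rho}$, so $g(QX,Y)=0$ as well. Every term therefore vanishes, and comparing with the first computation yields $X(\mu)=0$.

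For the ``in particular'' statement, I would decompose $\grad\,\mu$ along the orthogonal eigenspace decomposition of $A$ as $\grad\,\mu=\sum_{i}V_{i}$ with $V_{i}\in E_{\rho_{i}}$. For every $\rho_{i}\neq\mu$ and every $X\in E_{\rho_{i}}$ the first part gives $g(X,V_{i})=g(X,\grad\,\mu)=X(\mu)=0$, so the restriction of $g$ to $E_{\rho_{i}}$ (which is non-degenerate) forces $V_{i}=0$; thus $\grad\,\mu\in E_{\mu}$.

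No serious obstacle arises: the calculation is mechanical once one notices that only the $Y$-component of \eqref{eq:main} is needed, and it relies solely on the self-adjointness of $A$ and on the commutation of $A$ with $P$ and $Q$. In particular neither the specific expression \eqref{eq:lambda} for $\Lambda$ nor the hypothesis $\epsilon=0$ enters the argument, so the lemma is actually valid for arbitrary $\epsilon$.
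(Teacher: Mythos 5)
Your argument is correct and essentially the paper's own: both proofs differentiate $AY=\mu Y$ along $X$, substitute \eqref{eq:main} for $(\nabla_{X}A)Y$, and conclude from the facts that eigenspaces of the $g$-self-adjoint $A$ are mutually orthogonal and $Q$-invariant; you merely contract the identity with $Y$ where the paper projects it onto the $\mu$-eigenspace, and your treatment of the ``in particular'' clause matches the paper's. The only cosmetic point is that $Y$ should be taken as a local unit eigenvector \emph{field} (available because the eigenspace distributions are differentiable on $M^{0}$), which is exactly what the paper does implicitly, and your closing observation that the lemma holds for arbitrary $\epsilon$ is consistent with the paper, where the hypothesis $\epsilon=0$ is only imposed after the lemma.
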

\begin{rem}
Lemma \ref{lem:eigenvectors} is known for projectively equivalent (Example \ref{exmp:projective}) and $h$-projectively equivalent (Example \ref{exmp:hprojective}) metrics. For projectively equivalent metrics it is a classical result which was already known to Levi-Civita \cite{Levi1896}. For $h$-projectively equivalent metrics, it follows from \cite{ApostolovI,MatRos}.  
\end{rem}
\begin{proof}
Let $Y$ be an eigenvector field of $A$ corresponding to the eigenvalue $\mu$. For arbitrary $X\in TM$, we obtain $\nabla_{X}(AY)=\nabla_{X}(\mu Y)=X(\mu)Y+\mu \nabla_{X}Y$ and $\nabla_{X}(AY)=(\nabla_{X}A)Y+A\nabla_{X}Y$. Combining these equations and replacing the expression $(\nabla_{X}A)Y$ by \eqref{eq:main} we obtain
\begin{align}
(A-\mu Id)\nabla_{X}Y=X(\mu)Y-g(Y,X)\Lambda-g(Y,\Lambda)X-g(Y,QX)P\Lambda-g(Y,P\Lambda)QX.\label{eq:covderiv}
\end{align}
Now let $X$ be an eigenvector of $A$ corresponding to the eigenvalue $\rho$ and suppose that $\rho\neq\mu$. Since $A$ is $g$-self-adjoint, the eigenspaces of $A$ corresponding to different eigenvalues are orthogonal to each other. Moreover, since $A$ and $Q$ commute, $Q$ leaves the eigenspaces of $A$ invariant. Using \eqref{eq:covderiv} we obtain 
\begin{align}
(A-\mu Id)\nabla_{X}Y+g(Y,\Lambda)X+g(Y,P\Lambda)QX=X(\mu)Y.\nonumber
\end{align}
Since the left-hand side is orthogonal to the $\mu$-eigenspace of $A$, we necessarily have $X(\mu)=0$. We have shown that $g(\mathrm{grad}\,\mu,X)=X(\mu)=0$ for any eigenvalue $\mu$ and any eigenvector field $X$ corresponding to an eigenvalue different form $\mu$. This forces $\mathrm{grad}\,\mu$ to be contained in the eigenspace of $A$ corresponding to $\mu$. 
\end{proof}
Now suppose that $\epsilon=0$. Let us denote the non-constant eigenvalues of $A$ by $\rho_{1},...,\rho_{l}$. Using Lemma \ref{lem:dim}, the corresponding eigenspaces are $1$-dimensional and Lemma \ref{lem:eigenvectors} implies that they are spanned by the gradients $\mathrm{grad}\,\rho_{1},...,\mathrm{grad}\,\rho_{l}$ respectively. Since $P$ and $A$ commute, $P$ leaves the eigenspaces of $A$ invariant, hence, $P\mathrm{grad}\,\rho_{i}=p_{i}\mathrm{grad}\,\rho_{i}$ for some real number $p_{i}$. Now $P$ is skew with respect to $g$ and we obtain $0=g(\mathrm{grad}\,\rho_{i},P\mathrm{grad}\,\rho_{i})=p_{i}g(\mathrm{grad}\,\rho_{i},\mathrm{grad}\,\rho_{i})$ which implies that 
$$P\mathrm{grad}\,\rho_{i}=0.$$
On the other hand, by equation \eqref{eq:lambda} 
$$\Lambda=\frac{1}{2}\mathrm{grad}\,\mathrm{trace}\,A=\frac{1}{2}(\mathrm{grad}\,\rho_{1}+...+\mathrm{grad}\,\rho_{l}).$$ 
Combining the last two equations, we obtain $P\Lambda=0$. It follows from Remark \ref{rem:mainprojective} that $g$ and $\bar{g}$ are projectively equivalent and, hence, Theorem \ref{thm:result2} is proven.

\nocite{*}
\bibliographystyle{plain}

\begin{thebibliography}{99}


\bibitem{ApostolovI} V. Apostolov, D. Calderbank, P. Gauduchon, 
\emph{Hamiltonian 2-forms in Kähler geometry. I. General theory}, J. Differential Geom.  {\bf 73},  no. 3, 359--412, 2006



\bibitem{Kiyohara2010} K. Kiyohara, P. J.  Topalov, \emph{On Liouville
    integrability of $h$-projectively equivalent K\"ahler metrics},
  Proc. Amer. Math. Soc. {\bf 139}(2011), 231--242. 


\bibitem{Levi1896} T. Levi-Civita, \emph{Sulle trasformazioni delle equazioni dinamiche}, Ann. di math.
  {\bf 24}, 255--300, 1896

\bibitem{Inventiones2003} V. S. Matveev, \emph{Hyperbolic manifolds
    are geodesically rigid}, Invent. math.  {\bf 151}, no. 3, 579--609,
  2003

\bibitem{MT2001} V. S. Matveev, P. J. Topalov, \emph{Integrability in
    the theory of geodesically equivalent metrics}, J. Phys. A {\bf  34}, no. 11, 2415--2433, 2001


\bibitem{MatRos} V. S. Matveev, S. Rosemann, {\it Proof of the Yano-Obata conjecture for holomorph-projective transformations}, arXiv:1103.5613 [math.DG], 2011


\bibitem{Mikes1996} J. Mikes, \emph{Geodesic Mappings Of
   Affine-Connected And Riemannian Spaces}, Journal of Math. Sciences
 {\bf 78}, no. 3, 311--333, 1996

\bibitem{Mikes} J. Mikes, \emph{Holomorphically projective mappings
    and their generalizations.}, J. Math. Sci. (New
  York) {\bf 89}, no. 3, 1334--1353, 1998


\bibitem{Otsuki1954} T. Otsuki, Y. Tashiro, \emph{On curves in
    Kaehlerian spaces}, Math. Journal of Okayama University {\bf 4}, 57--78, 1954


\bibitem{Sinjukov} N. S. Sinjukov, {\it Geodesic mappings of
    Riemannian spaces.} (in Russian) ``Nauka'', Moscow, 1979,
  MR0552022, Zbl 0637.53020.


\bibitem{Tashiro1956} Y. Tashiro, \emph{On A Holomorphically
    Projective Correspondence In An Almost Complex Space},
  Math. Journal of Okayama University 6, 147--152, 1956


\bibitem{Top2003} P.  J. Topalov, \emph{Geodesic Compatibility And
    Integrability Of Geodesic Flows}, Journal of Mathematical Physics
  {\bf 44}, no. 2, 913--929, 2003


\end{thebibliography}

\end{document}